\theoremstyle{plain}
\newtheorem{lemma}{Lemma}
\newtheorem{theorem}{Theorem}
\theoremstyle{definition}
\newtheorem{remark}{Remark}
\newcommand{\Sec}[1]{Section~\ref{sec:#1}}
\newcommand{\Eq}[1]{Eq.~(\ref{eq:#1})}
\newcommand{\Alg}[1]{Alg.~\ref{alg:#1}}
\newcommand{\Theorem}[1]{Theorem~\ref{th:#1}}
\newcommand{\Lemma}[1]{Lemma~\ref{lem:#1}}
\newcommand{\Appendix}[1]{Appendix~\ref{appendix:#1}}
\newcommand{\BEAS}{\begin{eqnarray*}}
\newcommand{\EEAS}{\end{eqnarray*}}
\newcommand{\BEA}{\begin{eqnarray}}
\newcommand{\EEA}{\end{eqnarray}}
\newcommand{\BEQ}{\begin{equation}}
\newcommand{\EEQ}{\end{equation}}
\newcommand{\BIT}{\begin{itemize}}
\newcommand{\EIT}{\end{itemize}}
\newcommand{\BNUM}{\begin{enumerate}}
\newcommand{\ENUM}{\end{enumerate}}
\newcommand{\BA}{\begin{array}}
\newcommand{\EA}{\end{array}}
\newcommand{\one}{\mathds{1}}
\newcommand{\argmin}{\mathop{\rm argmin}}
\newcommand{\tr}{\mathop{ \rm tr}}
\newcommand{\idm}{I}
\def \S{  { \Sigma} }
\def \E{{\mathbb E}}
\def \R{{\mathbb R}}
\newcommand{\Exp}[1]{\E\left[#1\right]}
\newcommand{\CvxSet}{\mathcal{K}}
\newcommand{\ie}{i.e.\ }
\newcommand{\eg}{e.g.\ }
\newcommand{\st}{\mbox{\ s.t.\ }}
\newcommand{\inlinetitle}[1]{\textbf{#1.}}
\newcommand{\avgFun}{\bar{f}}
\newcommand{\AlgName}{distributed randomized smoothing\xspace}
\newcommand{\AlgN}{DRS\xspace}
\let\OLDthebibliography\thebibliography
\renewcommand\thebibliography[1]{
  \OLDthebibliography{#1}
  \setlength{\parskip}{2.4pt}
  \setlength{\itemsep}{0pt plus 0.3ex}
}
\title{Optimal Algorithms for Non-Smooth\\Distributed Optimization in Networks}
\author{
Kevin Scaman\thanks{Noah's Ark Lab, Huawei Technologies, Paris, France. Email: kevin.scaman@huawei.com}
\qquad Francis Bach\thanks{INRIA - D\'epartement d'informatique de l'ENS, Ecole Normale Sup\'erieure, CNRS, INRIA, PSL Research University, 75005 Paris, France. Email: francis.bach@inria.fr}
\vspace{0.5em}
\\S\'ebastien Bubeck\thanks{Microsoft Research, Redmond, United States. Email: sebubeck@microsoft.com}
\qquad Yin Tat Lee\footnotemark[3]\ \,\thanks{University of Washington, Seattle, United States. Email: yintat@uw.edu}
\qquad Laurent Massouli\'e\footnotemark[2]\ \,\thanks{MSR-INRIA Joint Centre, Paris, France. Email: laurent.massoulie@inria.fr}
}
\date{}
\begin{document}
\maketitle

\begin{abstract}
In this work, we consider the distributed optimization of non-smooth convex functions using a network of computing units. We investigate this problem under two regularity assumptions: (1) the Lipschitz continuity of the \emph{global} objective function, and (2) the Lipschitz continuity of \emph{local} individual functions. Under the \emph{local regularity} assumption, we provide the first optimal first-order decentralized algorithm called \emph{multi-step primal-dual} (MSPD) and its corresponding optimal convergence rate. A notable aspect of this result is that, for non-smooth functions, while the dominant term of the error is in $O(1/\sqrt{t})$, the structure of the communication network only impacts a second-order term in $O(1/t)$, where $t$ is time. 
In other words, the error due to limits in communication resources decreases at a fast rate even in the case of non-strongly-convex objective functions. 
Under the \emph{global regularity} assumption, we provide a simple yet efficient algorithm called \emph{\AlgName} (\AlgN) based on a local smoothing of the objective function, and show that \AlgN is within a $d^{1/4}$ multiplicative factor of the optimal convergence rate, where $d$ is the underlying dimension.
\end{abstract}

\section{Introduction}\label{sec:intro}
Distributed optimization finds many applications in machine learning, for example when the dataset is large and training is achieved using a cluster of computing units.
As a result, many algorithms were recently introduced to minimize the average $\avgFun = \frac{1}{n} \sum_{i=1}^n f_i$ of functions $f_i$ which are respectively accessible by separate nodes in a network \cite{nedic2009distributed,boyd2011distributed,duchi2012dual,doi:10.1137/14096668X}. Most often, these algorithms alternate between local and incremental improvement steps (such as gradient steps) with communication steps between nodes in the network, and come with a variety of convergence rates (see for example \cite{shi2014linear,doi:10.1137/14096668X,jakovetic2015linear,2016arXiv160703218N}).

Recently, a theoretical analysis of first-order distributed methods provided optimal convergence rates for strongly-convex and smooth optimization in networks \cite{ScamanBBLM17}.
In this paper, we extend this analysis to the more challenging case of non-smooth convex optimization. The main contribution of this paper is to provide optimal convergence rates and their corresponding optimal algorithms for this class of distributed problems under two regularity assumptions: (1) the Lipschitz continuity of the \emph{global} objective function $\avgFun$, and (2) a bound on the average of Lipschitz constants of \emph{local} functions $f_i$.

Under the \emph{local regularity} assumption, we provide in \Sec{decentr} matching upper and lower bounds of complexity in a decentralized setting in which communication is performed using the \emph{gossip} algorithm~\cite{boyd2006randomized}. Moreover, we propose the first optimal algorithm for non-smooth decentralized optimization, called \emph{multi-step primal-dual} (MSPD). Under the more challenging \emph{global regularity} assumption, we show in \Sec{global} that distributing the simple smoothing approach introduced in~\cite{doi:10.1137/110831659} yields fast convergence rates with respect to communication.
This algorithm, called \emph{\AlgName} (\AlgN), achieves a convergence rate matching the lower bound up to a $d^{1/4}$ multiplicative factor, where $d$ is the dimensionality of the problem.

Our analysis differs from the smooth and strongly-convex setting in two major aspects: (1) the naïve \emph{master/slave} distributed algorithm is in this case not optimal, and (2) the convergence rates differ between communication and local computations. More specifically, error due to limits in communication resources enjoys fast convergence rates, as we establish by formulating the optimization problem as a composite saddle-point problem with a smooth term for communication and non-smooth term for the optimization of the local functions (see \Sec{decentr} and \Eq{primaldual} for more details).

\inlinetitle{Related work}
Many algorithms were proposed to solve the decentralized optimization of an average of functions (see for example \cite{nedic2009distributed,jakovetic2014fast,duchi2012dual,doi:10.1137/14096668X,Mokhtari:2016:DDD:2946645.2946706,boyd2011distributed,wei2012distributed,shi2014linear}), and a sheer amount of work was devoted to improving the convergence rate of these algorithms \cite{shi2014linear,jakovetic2015linear}.
In the case of non-smooth optimization, fast communication schemes were developed in \cite{lan2017communication,jaggi2014communication}, although precise optimal convergence rates were not obtained.
Our decentralized algorithm is closely related to the recent primal-dual algorithm of \cite{lan2017communication} which enjoys fast communication rates in a decentralized and stochastic setting. Unfortunately, their algorithm lacks gossip acceleration to reach optimality with respect to communication time.
Finally, optimal convergence rates for distributed algorithms were investigated in \cite{ScamanBBLM17} for smooth and strongly-convex objective functions, and \cite{Shamir:2014:FLO:2968826.2968845,DBLP:conf/nips/ArjevaniS15} for totally connected networks.

\section{Distributed optimization setting}\label{sec:setting}

\inlinetitle{Optimization problem}
Let $\mathcal{G} = (\mathcal{V},\mathcal{E})$ be a strongly connected directed graph of $n$ computing units and diameter $\Delta$, each having access to a convex function $f_i$ over a convex set $\CvxSet\subset\R^d$. We consider minimizing the average of the local functions
\BEQ\label{eq:global_opt}
\min_{\theta\in\CvxSet} \ \avgFun(\theta) = \frac{1}{n} \sum_{i=1}^n f_i(\theta)\,,
\EEQ
in a distributed setting. More specifically, we assume that each computing unit can compute a subgradient $\nabla f_i(\theta)$ of its own function in one unit of time, and communicate values (\ie vectors in~$\R^d$) to its neighbors in $\mathcal{G}$. A \emph{direct} communication along the edge $(i,j)\in\mathcal{E}$ requires a time $\tau\geq 0$.
These actions may be performed asynchronously and in parallel, and each machine $i$ possesses a local version of the parameter, which we refer to as $\theta_i\in\CvxSet$.

\inlinetitle{Regularity assumptions}
Optimal convergence rates depend on the precise set of assumptions applied to the objective function. In our case, we will consider two different constraints on the regularity of the functions:
\BNUM
\item[(A1)] \textbf{Global regularity:} the (global) function $\avgFun$ is convex and $L_g$-Lipschitz continuous, in the sense that, for all $\theta,\theta'\in\CvxSet$,
\BEQ
|\avgFun(\theta) - \avgFun(\theta')| \leq L_g\|\theta-\theta'\|_2\,.
\EEQ
\item[(A2)] \textbf{Local regularity:} Each local function is convex and $L_i$-Lipschitz continuous, and we denote as $L_\ell = \sqrt{\frac{1}{n}\sum_{i=1}^n L_i^2}$ the $\ell_2$-average of the local Lipschitz constants.
\ENUM
Assumption (A1) is \emph{weaker} than (A2), as we always have $L_g \leq L_\ell$. Moreover, we may have $L_g = 0$ and $L_\ell$ arbitrarily large, for example with two linear functions $f_1(x) = -f_2(x) = a x$ and $a\rightarrow +\infty$. 
We will see in the following sections that the local regularity assumption is easier to analyze and leads to matching upper and lower bounds. For the global regularity assumption, we only provide an algorithm with a $d^{1/4}$ competitive ratio, where $d$ is the dimension of the problem. Finding an optimal distributed algorithm for global regularity is, to our understanding, a much more challenging task and is left for future work.

Finally, we assume that the feasible region $\CvxSet$ is convex and bounded, and denote by $R$ the radius of a ball containing $\CvxSet$, \ie
\BEQ
\forall \theta\in\CvxSet,\,\,\, \|\theta - \theta_0\|_2\leq R\,,
\EEQ
where $\theta_0\in\CvxSet$ is the initial value of the algorithm, that we set to $\theta_0=0$ without loss of generality.

\inlinetitle{Black-box optimization procedure}
The lower complexity bounds in \Theorem{lb} and \Theorem{lb_W} depend on the notion of black-box optimization procedures of \cite{ScamanBBLM17} that we now recall.
A black-box optimization procedure is a distributed algorithm verifying the following constraints:
\BNUM
\item \textbf{Local memory:} each node $i$ can store past values in a (finite) internal memory $\mathcal{M}_{i,t}\subset\R^d$ at time $t\geq 0$. These values can be accessed and used at time $t$ by the algorithm run by node $i$, and are updated either by local computation or by communication (defined below), that is, for all $i\in\{1,...,n\}$,
\begin{equation}
\mathcal{M}_{i,t}\subset \mathcal{M}^{comp}_{i,t} \cup \mathcal{M}^{comm}_{i,t}.
\end{equation}

\item \textbf{Local computation:} each node $i$ can, at time $t$, compute a subgradient of its local function $\nabla f_i(\theta)$ for a value $\theta\in\mathcal{M}_{i,t-1}$ in the node's internal memory before the computation.
\begin{equation}
\!\!\!
\mathcal{M}^{comp}_{i,t} = {\rm Span}\left(\{\theta,\nabla f_i(\theta) : \theta\in\mathcal{M}_{i,t-1}\}\right).
\end{equation}

\item \textbf{Local communication:} each node $i$ can, at time $t$, share a value to all or part of its neighbors, that is, for all $i\in\{1,...,n\}$,
\begin{equation}
\mathcal{M}^{comm}_{i,t} = {\rm Span}\bigg(\bigcup_{(j,i)\in\mathcal{E}}\mathcal{M}_{j,t-\tau}\bigg).
\end{equation}

\item \textbf{Output value:} each node $i$ must, at time $t$, specify one vector in its memory as local output of the algorithm, that is, for all $i\in\{1,...,n\}$,
\begin{equation}
\theta_{i,t} \in \mathcal{M}_{i,t}\,.
\end{equation}
\ENUM

Hence, a black-box procedure will return $n$ output values---one for each computing unit---and our analysis will focus on ensuring that \emph{all local output values} are converging to the optimal parameter of \Eq{global_opt}.
For simplicity, we assume that all nodes start with the simple internal memory $\mathcal{M}_{i,0} = \{0\}$. Note that communications and local computations may be performed in parallel and asynchronously.

\section{Distributed optimization under global regularity}\label{sec:global}
The most standard approach for distributing a first-order optimization method consists in computing a subgradient of the average function
\BEQ
\nabla \avgFun(\theta) = \frac{1}{n}\sum_{i=1}^n \nabla f_i(\theta)\,,
\EEQ
where $\nabla f_i(\theta)$ is any subgradient of $f_i$ at $\theta$, by sending the current parameter $\theta_t$ to all nodes, performing the computation of all local subgradients in parallel and averaging them on a master node. Since each iteration requires communicating twice to the whole network (once for $\theta_t$ and once for sending the local subgradients to the master node, which both take a time $\Delta\tau$ where $\Delta$ is the diameter of the network) and one subgradient computation (on each node and performed in parallel), the time to reach a precision~$\varepsilon$ with such a distributed subgradient descent is upper-bounded by
\BEQ\label{eq:naiveCv}
O\bigg(\Big(\frac{RL_g}{\varepsilon}\Big)^2(\Delta\tau + 1)\bigg).
\EEQ
Note that this convergence rate depends on the global Lipschitz constant $L_g$, and is thus applicable under the global regularity assumption. The number of subgradient computations in \Eq{naiveCv} (\ie the term not proportional to $\tau$) cannot be improved, since it is already optimal for objective functions defined on only one machine (see for example Theorem 3.13 p.~280 in \cite{bubeck2015convex}). However, quite surprisingly, the error due to communication time may benefit from fast convergence rates in $O (RL_g / \varepsilon )$. This result is already known under the local regularity assumption (\ie replacing $L_g$ with $L_\ell$ or even $\max_i L_i$) in the case of decentralized optimization \cite{lan2017communication} or distributed optimization on a totally connected network \cite{DBLP:conf/nips/ArjevaniS15}. To our knowledge, the case of global regularity has not been investigated by prior work.

\subsection{A simple algorithm with fast communication rates}\label{sec:alg}
We now show that the simple smoothing approach introduced in \cite{doi:10.1137/110831659} can lead to fast rates for error due to communication time. Let $\gamma > 0$ and $f:\R^d\rightarrow\R$ be a real function. We denote as \emph{smoothed version of $f$} the following function:
\BEQ\label{eq:gammaSmooth}
f^\gamma(\theta) = \Exp{f(\theta + \gamma X)},
\EEQ
where $X\sim\mathcal{N}(0,I)$ is a standard Gaussian random variable. The following lemma shows that $f^\gamma$ is both smooth and a good approximation of $f$.
\begin{lemma}[Lemma $E.3$ of \cite{doi:10.1137/110831659}]\label{lem:smooth_approx}
If $\gamma > 0$, then $f^\gamma$ is $\frac{L_g}{\gamma}$-smooth and, for all $\theta\in\R^d$,
\BEQ
\BA{lllll}
f(\theta) &\leq& f^\gamma(\theta) &\leq& f(\theta) + \gamma L_g \sqrt{d}\,.
\EA
\EEQ
\end{lemma}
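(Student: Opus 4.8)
The plan is to view $f^\gamma$ as the convolution of $f$ with the density $\phi_\gamma$ of $\mathcal{N}(0,\gamma^2 I)$, namely $f^\gamma(\theta)=\int_{\R^d} f(y)\,\phi_\gamma(y-\theta)\,dy$ with $\phi_\gamma(z)=(2\pi\gamma^2)^{-d/2}e^{-\|z\|_2^2/2\gamma^2}$, and to treat the two claims separately. For the approximation bounds, the lower bound $f(\theta)\le f^\gamma(\theta)$ follows from convexity of $f$ and Jensen's inequality, since $\Exp{f(\theta+\gamma X)}\ge f(\theta+\gamma\,\Exp{X})=f(\theta)$ as $\Exp{X}=0$. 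The upper bound uses only $L_g$-Lipschitz continuity: $f(\theta+\gamma X)\le f(\theta)+\gamma L_g\|X\|_2$, and taking expectations together with $\Exp{\|X\|_2}\le\sqrt{\Exp{\|X\|_2^2}}=\sqrt{d}$ (Jensen applied to the concave square root, using $\Exp{\|X\|_2^2}=d$) gives $f^\gamma(\theta)\le f(\theta)+\gamma L_g\sqrt d$.

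For smoothness, first note that $f^\gamma=f*\phi_\gamma$ is infinitely differentiable because $\phi_\gamma$ is a smooth, rapidly decaying kernel while $f$ is only locally bounded and measurable, so all $\theta$-derivatives may be placed on the kernel. The key is to compute the Hessian in a dimension-free form. Since $f$ is $L_g$-Lipschitz, by Rademacher's theorem it is differentiable almost everywhere with $\|\nabla f(y)\|_2\le L_g$ wherever the gradient exists, and differentiating once under the integral sign gives $\nabla f^\gamma(\theta)=\Exp{\nabla f(\theta+\gamma X)}$. Differentiating a second time and transferring this derivative onto the Gaussian kernel (Gaussian integration by parts), using $\nabla_\theta\phi_\gamma(y-\theta)=\gamma^{-2}(y-\theta)\phi_\gamma(y-\theta)$, yields
\[
\nabla^2 f^\gamma(\theta)=\frac{1}{\gamma}\,\Exp{\nabla f(\theta+\gamma X)\,X^\top}.
\]
Then for any unit vectors $u,v\in\R^d$, the Cauchy--Schwarz inequality gives
\[
u^\top\nabla^2 f^\gamma(\theta)\,v=\frac{1}{\gamma}\Exp{\big(u^\top\nabla f(\theta+\gamma X)\big)\big(X^\top v\big)}\le\frac{1}{\gamma}\sqrt{\Exp{\big(u^\top\nabla f(\theta+\gamma X)\big)^2}}\sqrt{\Exp{(X^\top v)^2}}\le\frac{L_g}{\gamma},
\]
where we used $\Exp{(X^\top v)^2}=\|v\|_2^2=1$ and $|u^\top\nabla f|\le L_g$. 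Hence the spectral norm of $\nabla^2 f^\gamma(\theta)$ is at most $L_g/\gamma$ at every $\theta$, which is exactly $\frac{L_g}{\gamma}$-smoothness.

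The one delicate point is the passage through the non-differentiability of $f$: the identity $\nabla f^\gamma(\theta)=\Exp{\nabla f(\theta+\gamma X)}$ and the subsequent integration by parts must be justified even though $\nabla f$ exists only almost everywhere, and I expect this to be the main (purely technical) obstacle. The cleanest remedy is to first establish the Hessian identity and the bound for a smooth approximation $f_\varepsilon=f*\phi_\varepsilon$, which is $C^\infty$, still convex, and still $L_g$-Lipschitz (convolution does not increase the Lipschitz constant), so that all manipulations are classical; one then lets $\varepsilon\to 0$ and uses dominated convergence---legitimate because the Gaussian kernel and the factor $\|X\|_2$ have all moments finite---to transfer both the expression for $\nabla^2 f^\gamma$ and the uniform bound $\|\nabla^2 f^\gamma(\theta)\|\le L_g/\gamma$ to $f$ itself. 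Everything else reduces to routine Gaussian moment computations.
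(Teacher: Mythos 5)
Your proof is correct, and the first thing to note is that the paper contains no proof of this statement at all: the lemma is imported verbatim as Lemma E.3 of \cite{doi:10.1137/110831659}, so the relevant comparison is with the argument in that reference. Your sandwich bounds coincide with the standard ones there (Jensen's inequality with convexity for $f\leq f^\gamma$; Lipschitz continuity together with $\E\|X\|_2\leq\sqrt{\E\|X\|_2^2}=\sqrt{d}$ for the upper bound). For the smoothness claim, however, your route is genuinely different: the cited proof never forms a Hessian, but bounds $\|\nabla f^\gamma(\theta)-\nabla f^\gamma(\theta')\|_2$ directly from the representation $\nabla f^\gamma(\theta)=\Exp{\nabla f(\theta+\gamma X)}$ by integrating $\|\nabla f\|_2\leq L_g$ against the difference of the two Gaussian densities, in essence a total-variation estimate between $\mathcal{N}(\theta,\gamma^2 I)$ and $\mathcal{N}(\theta',\gamma^2 I)$, which is dimension-free. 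Your mixed identity $\nabla^2 f^\gamma(\theta)=\frac{1}{\gamma}\Exp{\nabla f(\theta+\gamma X)X^\top}$ (one derivative on $f$, one thrown onto the kernel) followed by Cauchy--Schwarz reaches the same dimension-free constant and makes the mechanism more transparent: the Gaussian factor contributes only $\Exp{(X^\top v)^2}^{1/2}=1$ rather than $\E\|X\|_2\approx\sqrt d$, which is exactly what the cruder alternative of differentiating the score representation $\nabla f^\gamma(\theta)=\frac{1}{\gamma}\Exp{f(\theta+\gamma X)X}$ a second time on the kernel would lose (that route only yields $O(\sqrt{d}\,L_g/\gamma)$, as in Nesterov--Spokoiny-type analyses of Gaussian smoothing). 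What each approach buys: the reference's argument needs nothing beyond almost-everywhere gradients and avoids all second-order and interchange-of-limits issues, while yours is more computational and self-contained but must justify two differentiations under the integral. You correctly identify this as the delicate point, and your remedy is sound; note the pleasant shortcut that by the Gaussian semigroup property $(f*\phi_\varepsilon)^\gamma=f^{\sqrt{\gamma^2+\varepsilon^2}}$, so the uniform Hessian bound for the mollified functions passes to the limit immediately. Alternatively, you could avoid mollification altogether by observing that $f^\gamma$ is automatically $C^\infty$ (placing all derivatives on the kernel, legitimate since $f$ has at most linear growth) and then integrating by parts once, using that a Lipschitz function lies in $W^{1,\infty}_{\rm loc}$ with distributional gradient equal to its a.e.\ gradient.
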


Hence, smoothing the objective function allows the use of accelerated optimization algorithms and provides faster convergence rates. Of course, the price to pay is that each computation of the smoothed gradient $\nabla \avgFun^\gamma(\theta) = \frac{1}{n}\sum_{i=1}^n \nabla f_i^\gamma(\theta)$ now requires, at each iteration $m$, to sample a sufficient amount of subgradients $\nabla f_i(\theta + \gamma X_{m,k})$ to approximate \Eq{gammaSmooth}, where $X_{m,k}$ are $K$ i.i.d. Gaussian random variables. At first glance, this algorithm requires all computing units to synchronize on the choice of $X_{m,k}$, which would require to send to all nodes each $X_{m,k}$ and thus incur a communication cost proportional to the number of samples. Fortunately, computing units only need to share one random seed $s\in\R$ and then use a random number generator initialized with the provided seed to generate the same random variables $X_{m,k}$ without the need to communicate any vector. The overall algorithm, denoted \emph{\AlgName} (\AlgN), uses the randomized smoothing optimization algorithm of \cite{doi:10.1137/110831659} adapted to a distributed setting, and is summarized in \Alg{ALS}. The computation of a spanning tree $\mathcal{T}$ in step $1$ allows efficient communication to the whole network in time at most $\Delta\tau$. Most of the algorithm (\ie steps $2,4,6,7,9,10$ and $11$) are performed on the root of the spanning subtree $\mathcal{T}$, while the rest of the computing units are responsible for computing the smoothed gradient (step $8$). The seed $s$ of step $2$ is used to ensure that every $X_{m,k}$, although random, is the \emph{same on every node}. Finally, step $10$ is a simple orthogonal projection of the gradient step on the convex set $\CvxSet$.
\begin{algorithm}[t]
\caption{\AlgName}
\label{alg:ALS}
\begin{algorithmic}[1]
	\REQUIRE{approximation error $\varepsilon>0$, communication graph $\mathcal{G}$, $\alpha_0=1$, $\alpha_{t+1}=2/(1+\sqrt{1 + 4/\alpha_t^2})$}\\
	\hspace{1.5em}$T = \left\lceil \frac{20RL_gd^{1/4}}{\varepsilon} \right\rceil$, $K = \left\lceil \frac{5RL_gd^{-1/4}}{\varepsilon} \right\rceil$, $\gamma_t = Rd^{-1/4}\alpha_t$, $\eta_t = \frac{R\alpha_t}{2L_g(d^{1/4} + \sqrt{\frac{t+1}{K}})}$\,. 
  \ENSURE{optimizer $\theta_T$}
	\STATE Compute a spanning tree $\mathcal{T}$ on $\mathcal{G}$.
	\STATE Send a random seed $s$ to every node in $\mathcal{T}$.
	\STATE Initialize the random number generator of each node using $s$.
  \STATE $x_0 = 0$, $z_0 = 0$, $G_0 = 0$
  \FOR{$t = 0$ to $T-1$}
		\STATE $y_t = (1 - \alpha_t)x_t + \alpha_t z_t$
		\STATE Send $y_t$ to every node in $\mathcal{T}$.
		\STATE Each node $i$ computes $g_i = \frac{1}{K}\sum_{k=1}^K\nabla f_i(y_t + \gamma_t X_{t,k})$, where $X_{t,k}\sim\mathcal{N}(0,I)$
		\STATE $G_{t+1} = G_t + \frac{1}{n\alpha_t}\sum_i g_i$
		\STATE $z_{t+1} = \argmin_{x\in\CvxSet} \|x + \eta_{t+1}G_{t+1}\|_2^2$ 
		\STATE $x_{t+1} = (1 - \alpha_t)x_t + \alpha_t z_{t+1}$
  \ENDFOR
	\STATE \textbf{return} $\theta_T = x_T$
\end{algorithmic}
\end{algorithm}
We now show that the \AlgN algorithm converges to the optimal parameter under the global regularity assumption.

\begin{theorem}\label{th:cv_rate}
Under global regularity (A1), \Alg{ALS} achieves an approximation error $\Exp{\avgFun(\theta_T)} - \avgFun(\theta^*)$ of at most $\varepsilon>0$ in a time $T_\varepsilon$ upper-bounded by
\BEQ
O\bigg( \frac{RL_g}{\varepsilon}(\Delta\tau+1) d^{1/4} + \Big(\frac{RL_g}{\varepsilon}\Big)^2 \bigg)\,.
\EEQ
\end{theorem}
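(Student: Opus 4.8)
The plan is to recognize \Alg{ALS} as the accelerated randomized-smoothing method of \cite{doi:10.1137/110831659} applied to the global objective $\avgFun$, where the smoothed gradient $\nabla\avgFun^{\gamma_t}(y_t)$ is estimated by a Monte Carlo average that is evaluated in a distributed way over the spanning tree $\mathcal{T}$. I would split the argument into three parts: (i) showing that the aggregated gradient estimate is unbiased for $\nabla\avgFun^{\gamma_t}(y_t)$ with variance controlled by $L_g^2/K$; (ii) feeding this into the convergence analysis of the accelerated scheme on the smoothed objective and using \Lemma{smooth_approx} to transfer the guarantee back to $\avgFun$; and (iii) translating the resulting iteration count into wall-clock time on the network.

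For part (i), the key observation---and the only place where the distinction between global and local regularity matters---is that for each Gaussian sample $X_{t,k}$ the vector $\bar g_k = \frac1n\sum_i \nabla f_i(y_t + \gamma_t X_{t,k})$ is itself a subgradient of $\avgFun$ at $y_t + \gamma_t X_{t,k}$, since the subdifferential of an average is the average of the subdifferentials. Hence $\|\bar g_k\|_2 \le L_g$ even though the individual $f_i$ need not be Lipschitz. The quantity $\frac1n\sum_i g_i = \frac1K\sum_{k=1}^K \bar g_k$ assembled at the root in step~9 is thus an average of $K$ i.i.d. vectors, each of norm at most $L_g$ and with conditional mean $\E[\bar g_k \mid y_t] = \nabla\avgFun^{\gamma_t}(y_t)$ obtained by differentiating under the Gaussian expectation in \Eq{gammaSmooth}. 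This yields unbiasedness together with the variance bound $\E\big[\|\frac1n\sum_i g_i - \nabla\avgFun^{\gamma_t}(y_t)\|_2^2 \,\big|\, y_t\big] \le L_g^2/K$.

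For part (ii), I would combine these two facts with the standard accelerated stochastic gradient analysis, exactly as in \cite{doi:10.1137/110831659}. By \Lemma{smooth_approx} the objective $\avgFun^{\gamma_t}$ is $(L_g/\gamma_t)$-smooth, and the schedules $\gamma_t = Rd^{-1/4}\alpha_t$ and $\eta_t$ are calibrated so that the smoothness contribution and the smoothing bias $\gamma_t L_g\sqrt d$ are balanced across iterations. Propagating the acceleration potential through the coupled $(x_t, y_t, z_t)$ recursion and taking expectations over the sampling noise gives a bound of the form
\BEQ
\Exp{\avgFun(x_T)} - \avgFun(\theta^*) = O\bigg(\frac{L_g R d^{1/4}}{T} + \frac{L_g R}{\sqrt{TK}}\bigg),
\EEQ
where the first term collects the deterministic optimization and smoothing-bias errors and the second the accumulated variance. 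Substituting $T = \lceil 20 R L_g d^{1/4}/\varepsilon\rceil$ and $K = \lceil 5 R L_g d^{-1/4}/\varepsilon\rceil$ makes both terms $O(\varepsilon)$ (note that $TK = \Theta((RL_g/\varepsilon)^2)$, with the $d^{1/4}$ cancelling), so that $T$ iterations suffice. I expect this re-derivation of the accelerated-smoothing rate---tracking the three simultaneously varying schedules $\alpha_t, \gamma_t, \eta_t$ through the Nesterov estimate-sequence argument---to be the main technical obstacle; the cleanest route is to verify that the present parameters are an instance already covered by the convergence theorem of \cite{doi:10.1137/110831659} and invoke it, rather than reprove it from scratch.

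Finally, for part (iii), I would account for the cost of one iteration on the network: broadcasting $y_t$ from the root down $\mathcal{T}$ (step~7) and gathering the local gradient sums back to the root (needed for step~9) each take time at most $\Delta\tau$; the $K$ local subgradient evaluations in step~8 run in parallel across nodes in time $K$; and the remaining vector arithmetic (steps~6 and~9--11) costs $O(1)$. One iteration therefore takes $O(\Delta\tau + K + 1)$ time, so the total running time is $O(T(\Delta\tau + 1) + TK)$. Using $T(\Delta\tau+1) = O\big(\frac{RL_g}{\varepsilon}(\Delta\tau + 1) d^{1/4}\big)$ and $TK = O\big((RL_g/\varepsilon)^2\big)$ then yields the stated bound.
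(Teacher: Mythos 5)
Your proposal is correct and follows essentially the same route as the paper: the paper's proof simply invokes Corollary~2.4 of \cite{doi:10.1137/110831659} to get the bound $O\big(RL_g d^{1/4}/T + RL_g/\sqrt{TK}\big)$, substitutes the stated $T$ and $K$, and accounts for wall-clock time as $T_\varepsilon = T(2\Delta\tau + K)$, exactly as in your parts (ii) and (iii). Your part (i)---observing that $\frac{1}{n}\sum_i \nabla f_i$ is a subgradient of $\avgFun$ so that only the global Lipschitz constant $L_g$ enters the variance bound $L_g^2/K$---is a correct and worthwhile elaboration of a step the paper leaves implicit in its citation.
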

\begin{proof}
See \Appendix{proof_cv_rate}.
\end{proof}

More specifically, \Alg{ALS} completes its $T$ iterations by time 
\BEA\label{eq:ourCv}
T_\varepsilon&\leq& 40\left\lceil \frac{RL_gd^{1/4}}{\varepsilon} \right\rceil \Delta\tau + 100\left\lceil \frac{RL_gd^{1/4}}{\varepsilon} \right\rceil\left\lceil \frac{RL_gd^{-1/4}}{\varepsilon} \right\rceil\,.
\EEA
Comparing \Eq{ourCv} to \Eq{naiveCv}, we can see that our algorithm improves on the standard method when the dimension is not too large, and more specifically
\BEQ
d \leq \Big(\frac{RL_g}{\varepsilon}\Big)^4.
\EEQ
In practice, this condition is easily met, as $\varepsilon \leq 10^{-2}$ already leads to the condition $d \leq 10^8$ (assuming that $R$ and $L_g$ have values around $1$). Moreover, for problems of moderate dimension, the term $d^{1/4}$ remains a small multiplicative factor (\eg for $d = 1000$, $d^{1/4}\approx 6$).
Finally, note that \AlgN only needs the convexity of $\avgFun$, and the convexity of the local functions $f_i$ is not necessary for \Theorem{cv_rate} to hold.

\begin{remark}
Several other smoothing methods exist in the literature, notably the \emph{Moreau envelope} \cite{Moreau1965} enjoying a dimension-free approximation guarantee. However, the Moreau envelope of an average of functions is difficult to compute (requires a different oracle than computing a subgradient), and unfortunately leads to convergence rates with respect to local Lipschitz characteristics instead of~$L_g$.
\end{remark}

\subsection{Optimal convergence rate}\label{sec:lb}
The following result provides oracle complexity lower bounds under the global regularity assumption, and is proved in \Appendix{proof_lb}. This lower bound extends the communication complexity lower bound for totally connected communication networks from \cite{DBLP:conf/nips/ArjevaniS15}.

\begin{theorem}\label{th:lb}
Let $\mathcal{G}$ be a network of computing units of size $n > 0$, and $L_g,R>0$. There exists $n$ functions $f_i:\R^d \rightarrow \R$ such that (A1) holds and, for any $t < \frac{(d-2)}{2}\min\{\Delta\tau,1\}$ and any black-box procedure one has, for all $i\in\{1,...,n\}$,
\BEQ
\avgFun(\theta_{i,t}) - \min_{\theta\in B_2(R)}\avgFun(\theta) \geq \frac{RL_g}{36}\sqrt{\frac{1}{(1 + \frac{t}{2\Delta\tau})^2} + \frac{1}{1 + t}}\,.
\EEQ
\end{theorem}
\begin{proof}
See \Appendix{proof_lb}.
\end{proof}

Assuming that the dimension $d$ is large compared to the characteristic values of the problem (a standard set-up for lower bounds in non-smooth optimization \cite[Theorem $3.2.1$]{nesterov2004introductory}), \Theorem{lb} implies that, under the global regularity assumption (A1), the time to reach a precision $\varepsilon > 0$ with any black-box procedure is lower bounded by
\BEQ
\Omega\bigg( \frac{RL_g}{\varepsilon}\Delta\tau + \Big(\frac{RL_g}{\varepsilon}\Big)^2 \bigg),
\EEQ
where the notation $g(\varepsilon)=\Omega(f(\varepsilon))$ stands for $\exists C>0 \st \forall \varepsilon>0, g(\varepsilon)\geq Cf(\varepsilon)$.
This lower bound proves that the convergence rate of \AlgN in \Eq{ourCv} is optimal with respect to computation time and within a $d^{1/4}$ multiplicative factor of the optimal convergence rate with respect to communication. 

The proof of \Theorem{lb} relies on the use of two objective functions: first, the standard worst case function used for single machine convex optimization (see \eg \cite{bubeck2015convex}) is used to obtain a lower bound on the local computation time of individual machines. Then, a second function first introduced in \cite{DBLP:conf/nips/ArjevaniS15} is split on the two most distant machines to obtain worst case communication times. By aggregating these two functions, a third one is obtained with the desired lower bound on the convergence rate. The complete proof is available in \Appendix{proof_lb}.

\begin{remark}
The lower bound also holds for the average of local parameters $\frac{1}{n}\sum_{i=1}^n \theta_i$, and more generally any parameter that can be computed using any vector of the local memories at time $t$: in \Theorem{lb}, $\theta_{i,t}$ may be replaced by any $\theta_t$ such that
\BEQ
\theta_t\in{\rm Span}\bigg(\bigcup_{i\in\mathcal{V}}\mathcal{M}_{i,t}\bigg).
\EEQ
\end{remark}

\section{Decentralized optimization under local regularity}
\label{sec:decentr}
In many practical scenarios, the network may be unknown or changing through time, and a local communication scheme is preferable to the \emph{master/slave} approach of \Alg{ALS}. Decentralized algorithms tackle this problem by replacing targeted communication by \emph{local averaging} of the values of neighboring nodes \cite{boyd2006randomized}. More specifically, we now consider that, during a communication step, each machine $i$ broadcasts a vector $x_i\in\R^d$ to its neighbors, then performs a weighted average of the values received from its neighbors:
\BEQ
\mbox{node } i \mbox{ sends } x_i \mbox{ to his neighbors and receives } \textstyle \sum_j W_{ji} x_j\,.
\EEQ

In order to ensure the efficiency of this communication scheme, we impose standard assumptions on the matrix $W\in\R^{n\times n}$, called the \emph{gossip} matrix \cite{boyd2006randomized,ScamanBBLM17}:
\BNUM
\item $W$ is symmetric and positive semi-definite,
\item The kernel of $W$ is the set of constant vectors: ${\rm Ker}(W) = {\rm Span}(\one)$, where $\one = (1,...,1)^\top$,
\item $W$ is defined on the edges of the network: $W_{ij} \ne 0$ only if $i=j$ or $(i, j) \in \mathcal{E}$.
\ENUM

\subsection{Optimal convergence rate}

Similarly to the smooth and strongly-convex case of \cite{ScamanBBLM17}, the lower bound on the optimal convergence rate is obtained by replacing the diameter of the network with $1/\sqrt{\gamma(W)}$, where $\gamma(W) = \lambda_{n-1}(W) / \lambda_1(W)$ is the ratio between smallest non-zero and largest eigenvalues of $W$, also known as the \emph{normalized eigengap}.

\begin{theorem}\label{th:lb_W}
Let $L_\ell,R>0$ and $\gamma\in(0,1]$. There exists a matrix $W$ of eigengap $\gamma(W) = \gamma$,   and $n$ functions $f_i$ satisfying (A2), where $n$ is the size of $W$, such that for all $t<\frac{d-2}{2}\min(\tau/\sqrt{\gamma},1)$ 
and all $i\in\{1,...,n\}$,
\BEQ
\avgFun(\theta_{i,t}) - \min_{\theta\in B_2(R)}\avgFun(\theta) \geq \frac{RL_\ell}{108}\sqrt{\frac{1}{(1 + \frac{2t\sqrt{\gamma}}{\tau})^2} + \frac{1}{1 + t}}\,.
\EEQ
\end{theorem}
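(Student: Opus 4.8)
The plan is to follow the proof of \Theorem{lb}, but to specialize the network to a family whose \emph{eigengap} is prescribed and to redo the regularity accounting in terms of the $\ell_2$-average $L_\ell$ rather than the global constant $L_g$. First I would take the hard graph to be the path (linear graph) on $n$ nodes and let $W$ be a weighting of its Laplacian, tuning $n$ and the edge weights so that the normalized eigengap is \emph{exactly} $\gamma(W)=\gamma$; the three gossip assumptions (symmetric positive semi-definite, $\mathrm{Ker}(W)=\mathrm{Span}(\one)$, support on the edges) hold by construction. The classical path spectrum gives $\gamma(W)=\Theta(1/n^2)$, so $n\asymp 1/\sqrt{\gamma}$ and the diameter satisfies $\Delta=n-1\asymp 1/\sqrt{\gamma}$; this is the step that converts ``distance across the network'' into the quantity $1/\sqrt{\gamma}$ appearing in the bound, and matching $\frac{2t\sqrt{\gamma}}{\tau}$ with the analogue $\frac{t}{2\Delta\tau}$ of \Theorem{lb} forces the explicit constant relating $\Delta$ and $1/\sqrt{\gamma}$. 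The hypothesis $t<\frac{d-2}{2}\min(\tau/\sqrt{\gamma},1)$ is used precisely to guarantee that $d$ is large enough to host both hard functions and that no black-box procedure can have discovered too many coordinates by time $t$.

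I would then combine two worst-case functions living on orthogonal coordinate blocks of $\R^d$. On the first block I place the standard single-machine non-smooth worst case (\eg \cite[Theorem~3.13]{bubeck2015convex}), present \emph{identically} in every $f_i$; since each node still computes its own subgradients one at a time, after $t$ local computations any reachable point leaves a whole tail of coordinates untouched, giving suboptimality $\gtrsim RL_\ell/\sqrt{1+t}$ and hence the $\frac{1}{1+t}$ term, independently of communication. On the second block I split the Arjevani--Shamir chain function of \cite{DBLP:conf/nips/ArjevaniS15} across the two ends of the path. The decisive property of the gossip black-box model is locality: a degree-$k$ polynomial in $W$ has support within $k$ hops, and interleaved subgradient steps only enlarge a node's information on its own coordinates, never across an edge; hence no node can couple the two ends of the chain without $\gtrsim\Delta\asymp 1/\sqrt{\gamma}$ communication rounds, \ie time $\gtrsim\tau/\sqrt{\gamma}$. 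Because the chain dependency forces one such coupling to reveal each successive coordinate, at most $\sim t\sqrt{\gamma}/\tau$ coordinates are non-zero in any memory by time $t$, yielding the communication term $\frac{1}{(1+2t\sqrt{\gamma}/\tau)^2}$. Crucially, placing the two halves at the \emph{endpoints} makes \emph{every} node at distance $\ge\Delta/2$ from at least one of them, so the bound holds for all $i$ (and, as in the remark, for any parameter in the span of the memories).

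It remains to normalize the local functions and to aggregate. Since the two functions occupy orthogonal subspaces coupled only through $\|\theta\|_2\le R$, I would split the radius budget $R^2=R_1^2+R_2^2$ and optimize the allocation; as the two lower bounds are independent, this produces their Pythagorean combination $\sqrt{(\cdot)+(\cdot)}$, exactly the form of the statement, and the graph constants together with this allocation give the factor $\frac{1}{108}$ (against $\frac{1}{36}$ in \Theorem{lb}). The delicate point, and the reason the construction differs from the global-regularity one, is the regularity bookkeeping: I must keep every $L_i\asymp L_\ell$ (so that $L_\ell=\sqrt{\tfrac1n\sum_i L_i^2}$ hits the target) \emph{while} ensuring that the averaging $\avgFun=\frac1n\sum_i f_i$ does not dilute the communication hardness. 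A naive split that concentrates large Lipschitz mass on two nodes would shrink the effective subgradients of $\avgFun$ by a factor $\sim\sqrt{n}\asymp\gamma^{-1/4}$ and cost a spurious $\gamma^{1/4}$ in the bound; the local functions must therefore be arranged so that their subgradients along the chain direction \emph{align} under averaging, keeping the effective scale at $RL_\ell$.

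The main obstacle is exactly this communication lower bound under the local normalization: one must prove simultaneously that (i) no accelerated, Chebyshev-type choice of communication polynomials can beat the raw hop/spectral locality of $W$, so that revealed coordinates grow no faster than $t\sqrt{\gamma}/\tau$, and (ii) the split chain can be realized with uniformly $L_\ell$-Lipschitz local functions whose average retains suboptimality of order $RL_\ell$ (not $RL_\ell\gamma^{1/4}$). By contrast, verifying the path spectrum, the computation term, and the Pythagorean aggregation --- and pinning down the constant $108$ --- are comparatively routine.
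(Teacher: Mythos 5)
Your overall architecture is the same as the paper's (linear graph with tuned eigengap, reuse of the functions from \Theorem{lb}, constant bookkeeping turning $36$ into $108$), but there is a genuine gap at precisely the point you yourself flag as ``the main obstacle'': you never give the construction that makes $L_\ell \asymp L_g$, and your fallback intuition (``subgradients along the chain direction must align under averaging'') is left unproved. The paper's resolution is simple and concrete: instead of splitting the two halves of the hard function on the two endpoints, it \emph{replicates} them over end-blocks. With $m=\lfloor (n_\gamma+1)/3\rfloor$, every node of $I_0=\{1,\dots,m\}$ receives $\frac{1}{m}$ times the first function of \Theorem{lb} and every node of $I_1=\{n_\gamma-m+1,\dots,n_\gamma\}$ receives $\frac{1}{m}$ times the second, all other nodes receiving $0$. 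The average $\avgFun$ is \emph{unchanged}, so \Theorem{lb} applies verbatim with $\Delta$ replaced by $d(I_0,I_1)=n_\gamma-2m+1\geq (n_\gamma+1)/3$, which combined with \Lemma{lin_graphs} gives $1/\sqrt{\gamma}\leq 3d(I_0,I_1)/\sqrt{2}$; meanwhile each nonzero $L_i$ is only $\frac{n_\gamma}{m}L_g$, so $L_\ell\leq\sqrt{n_\gamma/m}\,L_g\leq 3L_g$, and this single factor $3$ is exactly what converts $\frac{RL_g}{36}$ into $\frac{RL_\ell}{108}$. Your endpoint split would indeed lose the factor $\gamma^{1/4}$ you diagnose ($L_\ell\asymp\sqrt{n}\,L_g$), so the replication step is not a routine detail but the central technical idea of the proof; note also the trade-off it entails, which your writeup misses: spreading over blocks shrinks the usable separation from the diameter $n_\gamma-1$ to $d(I_0,I_1)\approx n_\gamma/3$, which is harmless (absorbed into the constant) but must be tracked.

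Two further corrections. First, your obstacle (i) --- ruling out Chebyshev-type accelerated communication --- is vacuous in this model: by the black-box definition, $\mathcal{M}^{comm}_{i,t}$ is built only from neighbors' memories at time $t-\tau$, so crossing from $I_0$ to $I_1$ requires time at least $d(I_0,I_1)\tau$ no matter what polynomial of $W$ the algorithm emulates; the paper needs no spectral argument here, only this one-sentence locality observation. Second, there is no ``Pythagorean combination'' to redo at the level of \Theorem{lb_W}: the combination of the computation-hard and communication-hard functions, the radius allocation, and the resulting $\sqrt{(\cdot)+(\cdot)}$ form are already established inside \Theorem{lb} (via the aggregated function with parameters $\gamma,\delta,\beta,\alpha,k,l$), and the proof of \Theorem{lb_W} simply invokes that statement with $\Delta=d(I_0,I_1)$. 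So the steps you call ``comparatively routine'' are in fact inherited for free, while the step you defer is the one the proof actually turns on.
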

\begin{proof}
See \Appendix{proof_lb_W}.
\end{proof}

Assuming that the dimension $d$ is large compared to the characteristic values of the problem, \Theorem{lb_W} implies that, under the local regularity assumption (A2) and for a gossip matrix $W$ with eigengap $\gamma(W)$, the time to reach a precision $\varepsilon > 0$ with any \emph{decentralized} black-box procedure is lower-bounded by
\BEQ
\Omega\bigg( \frac{RL_\ell}{\varepsilon}\frac{\tau}{\sqrt{\gamma(W)}} + \Big(\frac{RL_\ell}{\varepsilon}\Big)^2 \bigg)\,.
\EEQ

The proof of \Theorem{lb_W} relies on linear graphs (whose diameter is proportional to $1/\sqrt{\gamma(L)}$ where $L$ is the Laplacian matrix) and \Theorem{lb}. More specifically, a technical aspect of the proof consists in splitting the functions used in \Theorem{lb} on multiple nodes to obtain a dependency in $L_\ell$ instead of $L_g$. The complete derivation is available in \Appendix{proof_lb_W}.

\subsection{Optimal decentralized algorithm}
We now provide an optimal decentralized optimization algorithm under (A2). This algorithm is closely related to the primal-dual algorithm proposed by \cite{lan2017communication} which we modify by the use of accelerated gossip using Chebyshev polynomials as in \cite{ScamanBBLM17}. 

First, we formulate our optimization problem in \Eq{global_opt} as the saddle-point problem \Eq{primaldual} below, based on a duality argument similar to that in \cite{ScamanBBLM17}. Then, we consider a  square root $A$ of the symmetric semi-definite positive gossip matrix $W$, of size $n\times m$ for some $m$, such that $A A^\top=W$ and  $A^\top u = 0$ if and only if $u$ is constant, and consider the equivalent problem of minimizing  $\frac{1}{n} \sum_{i=1}^n f_i(\theta_i)$ over 
$\Theta = (\theta_1,\dots,\theta_n) \in \mathcal{K}^n$ with the constraint that $\theta_1 = \cdots = \theta_n$, or equivalently $ \Theta  A    =0$. Through Lagrangian duality, we therefore get the equivalent problem:
\BEQ\label{eq:primaldual}
\min_{\Theta \in \mathcal{K}^n}\max_{\Lambda \in \R^{d \times n}} \frac{1}{n} \sum_{i=1}^n f_i(\theta_i) - \tr \Lambda^\top    \Theta A    \,.
\EEQ
We solve it by applying Algorithm 1 in Chambolle-Pock~\cite{Chambolle2011} (we could alternatively apply composite Mirror-Prox~\cite{he2015mirror}), with the following steps at each iteration $t$, with initialization $\Lambda^0 =0 $ and $\Theta^0 = \Theta^{-1} = (\theta_0,\dots,\theta_0)$:
\BEQ
\BA{lll}
(a) \ \ \ \Lambda^{t+1} & = & \displaystyle\Lambda^{t} -     \sigma (2 \Theta^{t+1} - \Theta^t)A   \\
(b) \ \ \ \Theta^{t+1} & = & \displaystyle\argmin_{\Theta \in \mathcal{K}^n}  \frac{1}{n} \sum_{i=1}^n f_i(\theta_i) - \tr \Theta^\top   \Lambda^{t+1}  A^\top 
+ \frac{1}{2 \eta}\tr  (  \Theta - \Theta^t )^\top (  \Theta - \Theta^t )\,,
\EA
\EEQ
where the gain parameters $\eta$, $\sigma$ are required to satisfy $\sigma \eta \lambda_1(W) \leq 1$.
We implement the algorithm with the variables $\Theta^t$ and $Y^t = \Lambda^t  A^\top = (y_1^t,\dots,y_n^t) \in \R^{d \times n}$, for which all updates can be made locally: 
Since $A   A^\top = W$, they now become
\BEQ\label{eq:SSPD}
\BA{lll}
(a') \ \ \  Y^{t+1} & = & \displaystyle Y^{t} -  \sigma (2 \Theta^{t+1} - \Theta^t) W  \\
(b') \ \ \ \  \ \theta_i^{t+1} & = & \displaystyle\argmin_{\theta_i \in \mathcal{K}}  \frac{1}{n}  f_i(\theta_i) -  \theta_i^\top y_i^{t+1} 
+ \frac{1}{2 \eta} \| \theta_i - \theta_i^t \|^2, \forall i \in \{1,\dots,n\}\,,
\EA
\EEQ
\begin{algorithm}[t]
\caption{multi-step primal-dual algorithm}
\label{alg:MSPD}
\begin{algorithmic}[1]
	\REQUIRE{approximation error $\varepsilon > 0$, gossip matrix $W\in\R^{n\times n}$,}\\
	\hspace{1.5em}$K = \lfloor 1/\sqrt{\gamma(W)}\rfloor$, $M = T = \lceil\frac{4\varepsilon}{RL_\ell}\rceil$, $c_1 = \frac{1-\sqrt{\gamma(W)}}{1+\sqrt{\gamma(W)}}$, $\eta = \frac{nR}{L_\ell}\frac{1-c_1^K}{1+c_1^K}$, $\sigma = \frac{1+c_1^{2K}}{\tau (1-c_1^K)^2}$\,.
  \ENSURE{optimizer $\bar{\theta}_T$}
  \STATE $\Theta_0 = 0$, $\Theta_{-1} = 0$, $Y_0 = 0$
  \FOR{$t = 0$ to $T-1$}
		\STATE $Y^{t+1} = Y^{t} -  \sigma$ \textsc{acceleratedGossip}($2 \Theta^t - \Theta^{t-1}$, $W$, $K$)\hfill$/\!/$\,see \cite[Alg.~2]{ScamanBBLM17}
		\STATE $\tilde\Theta^0 = \Theta^t$
		\FOR{$m = 0$ to $M-1$}
			\STATE $\tilde\theta_i^{m+1} =\frac{m}{m+2}  \tilde\theta_i^m  -  \frac{2}{m+2} \big[
\frac{\eta}{n} \nabla f_i(\tilde\theta_i^m) - \eta  y_i^{t+1} - \theta_i^t
\big]$, $\forall i \in \{1,\dots,n\}$
		\ENDFOR
		\STATE $\Theta^{t+1} = \tilde\Theta^M$
  \ENDFOR
	\STATE \textbf{return} $\bar{\theta}_T = \frac{1}{T}\frac{1}{n}\sum_{t=1}^T\sum_{i=1}^n \theta^t_i$
\end{algorithmic}
\end{algorithm}

The step $(b')$ still requires a proximal step for each function $f_i$. We approximate it by the outcome of the subgradient method run for $M$ steps, with a step-size proportional to $2/(m+2)$ as suggested in~\cite{lacoste2012simpler}. That is, initialized with $\tilde\theta_i^0 = \theta_i^t$, it performs the iterations
\BEQ
\tilde\theta_i^{m+1} =\frac{m}{m+2}  \tilde\theta_i^m  -  \frac{2}{m+2} \big[
\frac{\eta}{n} \nabla f_i(\tilde\theta_i^m) - \eta  y_i^{t+1} - \theta_i^t
\big], \quad m=0,\ldots, M-1.
\EEQ
We thus  replace the step $(b')$ by running $M$ steps of the subgradient method to obtain $\tilde\theta_i^M$.

\begin{theorem}\label{th:cv_SSPD}
Under local regularity (A2), the approximation error with the iterative algorithm of \Eq{SSPD} after $T$ iterations and using $M$ subgradient steps per iteration is bounded by
\BEQ
\avgFun(\bar{\theta}_T) - \min_{\theta\in\CvxSet}\avgFun(\theta) \leq  \frac{R L_\ell}{\sqrt{\gamma(W)}}\Big( \frac{1}{T} + \frac{1}{M} \Big).
\EEQ
\end{theorem}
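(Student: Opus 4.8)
The plan is to read \Eq{SSPD} as an instance of the Chambolle--Pock primal--dual scheme~\cite{Chambolle2011} applied to the saddle-point reformulation \Eq{primaldual}, and then to carry out three reductions: (i) turn the ergodic primal--dual gap produced by Chambolle--Pock into a bound on $\avgFun(\bar\theta_T)-\min_{\theta\in\CvxSet}\avgFun(\theta)$; (ii) control the two resulting error sources, namely the consensus violation and the dual magnitude, through the spectrum of $W$; and (iii) account for the fact that step $(b')$ is not solved exactly but only approximately, via $M$ steps of the subgradient method. The first two reductions will produce the $\frac1T$ term and the last one the $\frac1M$ term, both with the common prefactor $\frac{RL_\ell}{\sqrt{\gamma(W)}}$.

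For the exact-prox idealization, I would first invoke the standard Chambolle--Pock guarantee for the Lagrangian $\mathcal L(\Theta,\Lambda)=\frac1n\sum_i f_i(\theta_i)-\tr\Lambda^\top\Theta A$. Since the coupling operator is $\Theta\mapsto\Theta A$ with $\|A\|^2=\lambda_1(W)$, the step-size condition $\sigma\eta\lambda_1(W)\le 1$ from \Eq{SSPD} is exactly what is needed for the ergodic averages $\bar\Theta_T=\frac1T\sum_t\Theta^t$ (columns $\bar\theta_1,\dots,\bar\theta_n$) and $\bar\Lambda_T$ to satisfy, for every $(\Theta,\Lambda)$ and with $\Theta^0=\Lambda^0=0$,
\[
\tfrac1n\textstyle\sum_i f_i(\bar\theta_i)-\tr\Lambda^\top\bar\Theta_T A-\mathcal L(\Theta,\bar\Lambda_T)\ \le\ \tfrac1T\Big(\tfrac{\|\Theta-\Theta^0\|^2}{2\eta}+\tfrac{\|\Lambda-\Lambda^0\|^2}{2\sigma}\Big).
\]
Taking $\Theta=\Theta^*=(\theta^*,\dots,\theta^*)$ makes the coupling term vanish ($\Theta^* A=0$) so that $\mathcal L(\Theta^*,\bar\Lambda_T)=\avgFun(\theta^*)$ and $\|\Theta^*\|^2\le nR^2$. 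To pass to the single grand average $\bar\theta_T=\frac1n\sum_i\bar\theta_i$, I would lower-bound $\frac1n\sum_i f_i(\bar\theta_i)\ge\avgFun(\bar\theta_T)-\frac1n\sum_i L_i\|\bar\theta_i-\bar\theta_T\|$ using (A2), then apply Cauchy--Schwarz to get $\avgFun(\bar\theta_T)-L_\ell\,n^{-1/2}\|\bar\Theta_T(I-\tfrac1n\one\one^\top)\|$. The consensus deviation is controlled by the coupling term through the eigengap sandwich $W\succeq\lambda_{n-1}(W)(I-\tfrac1n\one\one^\top)$, which yields $\|\bar\Theta_T(I-\tfrac1n\one\one^\top)\|\le\lambda_{n-1}(W)^{-1/2}\|\bar\Theta_T A\|$. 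Choosing the free dual point $\Lambda$ on the sphere of radius $\rho=L_\ell/\sqrt{n\lambda_{n-1}(W)}$ aligned with $-\bar\Theta_T A$ makes $-\tr\Lambda^\top\bar\Theta_T A$ cancel the consensus penalty, leaving
\[
\avgFun(\bar\theta_T)-\avgFun(\theta^*)\ \le\ \tfrac1T\Big(\tfrac{nR^2}{2\eta}+\tfrac{L_\ell^2}{2\sigma\,n\,\lambda_{n-1}(W)}\Big).
\]
Minimizing over $\eta,\sigma$ under $\sigma\eta\lambda_1(W)=1$ (optimum at $\eta=\frac{nR}{L_\ell}\sqrt{\gamma(W)}$, matching the algorithm's choice) collapses both terms to $\frac{RL_\ell}{\sqrt{\gamma(W)}}\cdot\frac1T$.

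For the inexact proximal step, I would note that each subproblem in $(b')$ is $\frac1\eta$-strongly convex with subgradients bounded by roughly $\frac{L_i}{n}+\|y_i^{t+1}\|$, so the $\frac{2}{m+2}$-weighted subgradient method of~\cite{lacoste2012simpler} solves it to accuracy $O(\eta L_i^2/(n^2 M))$ after $M$ inner steps. Summing these per-node errors over the $n$ machines and feeding them into the Chambolle--Pock descent inequality as an additive perturbation of step $(b')$ should contribute an extra $O(\eta L_\ell^2/(nM))=O(RL_\ell\sqrt{\gamma(W)}/M)$ term, which is dominated by $\frac{RL_\ell}{\sqrt{\gamma(W)}}\cdot\frac1M$ since $\gamma(W)\le 1$; combining with the exact bound yields the claim.

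The main obstacle I anticipate is precisely this last accumulation of proximal errors: one must show that replacing the exact minimizer in $(b')$ by the output of the inner subgradient loop perturbs the Chambolle--Pock gap only additively (by $\sum_t\sum_i$ of the inner accuracies) and does not generate cross terms of order $\sqrt{\delta}\cdot R$ that would spoil the clean $O(1/M)$ scaling; this requires either an inexact-prox version of the Chambolle--Pock analysis or a direct telescoping argument tracking the strongly convex inner objective. The secondary technical points—the eigengap sandwich converting $\|\bar\Theta_T A\|$ into a consensus bound, and the choice of dual radius $\rho$—are routine once the spectral facts about $W$ (namely $\mathrm{Ker}(W)=\mathrm{Span}(\one)$ and $\lambda_{n-1}(W)\le\|A^\top u\|^2/\|u\|^2$ on $\one^\perp$) are in place.
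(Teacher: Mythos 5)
Your proposal is correct and follows essentially the same route as the paper's proof: the ergodic Chambolle--Pock gap with the dual variable restricted to radius $c = L_\ell/\sqrt{n\,\lambda_{n-1}(W)}$ (your explicitly aligned dual point is equivalent to the paper's restricted-gap formulation, which yields the penalized objective $\frac{1}{n}\sum_i f_i(\theta_i) + c\|\Theta A\|_F$), the same eigengap bound converting $\|\bar\Theta_T A\|_F$ into a consensus penalty, the same optimal choice $\eta = \frac{nR\sqrt{\gamma(W)}}{L_\ell}$ under $\sigma\eta\lambda_1(W)=1$, and the same additive accounting of the inexact proximal steps via the strongly-convex subgradient rate of \cite{lacoste2012simpler}, including the final domination $RL_\ell\sqrt{\gamma(W)}/M \leq RL_\ell/(M\sqrt{\gamma(W)})$. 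The obstacle you flag about inexact proximal steps is resolved in the paper exactly as you anticipate, by invoking the appropriate gap notion of \cite[Eq.~(11)]{Chambolle2011} so that the per-node inner accuracies $\delta_i = \frac{\eta L_i^2}{nM}$ enter the bound purely additively.
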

\begin{proof}
See \Appendix{proof_cv_MSPD}.
\end{proof}
\Theorem{cv_SSPD} implies that the proposed algorithm achieves an error of at most $\varepsilon$ in a time no larger than 
\BEQ
O\bigg(\frac{R L_\ell}{\varepsilon}\frac{\tau}{\sqrt{\gamma(W)} }
+ \bigg(  \frac{R L_\ell}{\varepsilon} \frac{1}{\sqrt{\gamma(W)}} \bigg)^2\bigg)\,.
\EEQ
While the first term (associated to communication) is optimal, the second does not match the lower bound of \Theorem{lb_W}. This situation is similar to that of strongly-convex and smooth decentralized optimization \cite{ScamanBBLM17}, when the number of communication steps is taken equal to the number of overall iterations. 

By  using Chebyshev acceleration \cite{chebyshevAcc,Arioli:2014:CAI:2638909.2639021} with an increased number of communication steps, the algorithm reaches the optimal convergence rate. More precisely, since one communication step is a multiplication (of $\Theta$ e.g.) by the gossip matrix $W$, performing $K$ communication steps is equivalent to multiplying by a power of $W$. More generally, multiplication by any polynomial $P_K(W)$ of degree $K$ can be achieved in $K$ steps. Since our algorithm depends on the eigengap of the gossip matrix, a good choice of polynomial consists in maximizing this eigengap $\gamma(P_K(W))$. This is the approach followed by \cite{ScamanBBLM17} and leads to the choice $P_K(x) = 1 - T_K(c_2(1-x)) / T_K(c_2)$, where $c_2=(1+\gamma(W))/(1-\gamma(W))$ and $T_K$ are the Chebyshev polynomials \cite{chebyshevAcc} defined as $T_0(x) = 1$, $T_1(x) = x$, and, for all $k\geq 1$, $T_{k+1}(x) = 2xT_k(x)-T_{k-1}(x)$.
We refer the reader to \cite{ScamanBBLM17} for more details on the method. Finally, as mentioned in \cite{ScamanBBLM17}, chosing $K=\lfloor 1/\sqrt{\gamma(W)}\rfloor$ leads to an eigengap $\gamma(P_K(W)) \geq 1/4$ and the optimal convergence rate. 

We denote the resulting algorithm as \emph{multi-step primal-dual} (MSPD) and describe it in \Alg{MSPD}. The procedure \textsc{acceleratedGossip} is extracted from \cite[Algorithm 2]{ScamanBBLM17} and performs one step of Chebyshev accelerated gossip, while steps $4$ to $8$ compute the approximation of the minimization problem (b') of \Eq{SSPD}. Our performance guarantee for the MSPD algorithm is then the following:

\begin{theorem}\label{th:cv_MSPD}
Under local regularity (A2), \Alg{MSPD} achieves an approximation error $\avgFun(\bar{\theta}_T) - \avgFun(\theta^*)$ of at most $\varepsilon>0$ in a time $T_\varepsilon$ upper-bounded by
\BEQ\label{eq:cv_MSPD}
O\bigg( \frac{RL_\ell}{\varepsilon}\frac{\tau}{\sqrt{\gamma(W)}} + \Big(\frac{RL_\ell}{\varepsilon}\Big)^2 \bigg)\, ,
\EEQ
which matches the lower complexity bound of \Theorem{lb_W}. \Alg{MSPD} is therefore optimal under the the local regularity assumption (A2).
\end{theorem}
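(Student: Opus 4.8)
The plan is to prove \Theorem{cv_MSPD} by reducing it to \Theorem{cv_SSPD}, treating the Chebyshev-accelerated gossip as nothing more than a change of gossip matrix. Concretely, I would first establish that $\tilde W := P_K(W)$, with $P_K(x) = 1 - T_K(c_2(1-x))/T_K(c_2)$ and $c_2 = (1+\gamma(W))/(1-\gamma(W))$, is itself a valid gossip matrix: it is a real polynomial in the symmetric PSD matrix $W$, hence symmetric; since $P_K(0)=0$ and $P_K$ is nonnegative and increasing on the spectrum of $W$, the matrix $\tilde W$ is PSD with ${\rm Ker}(\tilde W) = {\rm Ker}(W) = {\rm Span}(\one)$; and multiplication by $\tilde W$ is realized by $K$ successive multiplications by $W$, i.e. $K$ communication rounds, through the three-term Chebyshev recursion implemented by \textsc{acceleratedGossip}. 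The crucial quantitative input, imported from \cite{ScamanBBLM17}, is that the choice $K = \lfloor 1/\sqrt{\gamma(W)}\rfloor$ yields a well-conditioned matrix, $\gamma(\tilde W) \geq 1/4$.

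Given this, the second step is to observe that \Alg{MSPD} is exactly the iteration \Eq{SSPD} run with $\tilde W$ in place of $W$: step $3$ replaces the single multiplication by $W$ in update $(a')$ with a multiplication by $\tilde W$, the gain parameters $\eta$ and $\sigma$ are the instantiation dictated by \Theorem{cv_SSPD} for $\tilde W$ (in particular respecting $\sigma\eta\lambda_1(\tilde W)\leq 1$), and the inner loop over steps $4$ to $8$ is the $M$-step subgradient approximation of the proximal update $(b')$. Applying \Theorem{cv_SSPD} to $\tilde W$ and using $\gamma(\tilde W)\geq 1/4$ then gives
$$\avgFun(\bar\theta_T) - \min_{\theta\in\CvxSet}\avgFun(\theta) \leq \frac{RL_\ell}{\sqrt{\gamma(\tilde W)}}\Big(\frac 1T + \frac 1M\Big) \leq 2RL_\ell\Big(\frac 1T + \frac 1M\Big),$$
so that choosing $T = M = O(RL_\ell/\varepsilon)$ (as specified in \Alg{MSPD}) makes the right-hand side at most $\varepsilon$.

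The final step is the running-time accounting. Each of the $T$ outer iterations performs one accelerated-gossip call, costing $K$ communication rounds and hence time $K\tau = \lfloor 1/\sqrt{\gamma(W)}\rfloor\,\tau$, together with $M$ subgradient evaluations carried out in parallel across nodes, costing time $M$. The total time is therefore $T(K\tau + M)$, and substituting $T = M = O(RL_\ell/\varepsilon)$ and $K = \lfloor 1/\sqrt{\gamma(W)}\rfloor$ yields $O\big(\frac{RL_\ell}{\varepsilon}\frac{\tau}{\sqrt{\gamma(W)}} + (\frac{RL_\ell}{\varepsilon})^2\big)$, matching \Eq{cv_MSPD} and the lower bound of \Theorem{lb_W}.

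The main obstacle I anticipate is not the asymptotics but the bookkeeping needed to make the reduction faithful: one must check that passing from $W$ to the polynomial $\tilde W$ leaves the problem data ($R$, $L_\ell$, and the feasible set $\CvxSet^n$) untouched, that the normalization of $P_K$ is such that the eigengap bound $\gamma(\tilde W)\geq 1/4$ genuinely holds for the stated $K$, and that the parameters $c_1 = (1-\sqrt{\gamma(W)})/(1+\sqrt{\gamma(W)})$, $\eta$ and $\sigma$ of \Alg{MSPD} are the exact instantiation of the hypotheses of \Theorem{cv_SSPD} applied to $\tilde W$. Since \Theorem{cv_SSPD} already supplies the per-matrix convergence guarantee and \cite{ScamanBBLM17} supplies the eigengap amplification, the residual work is to verify these consistency conditions rather than to prove any new inequality.
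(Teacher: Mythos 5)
Your proposal is correct and takes essentially the same route as the paper: its Appendix D derives Theorem~\ref{th:cv_MSPD} from the Theorem~\ref{th:cv_SSPD} guarantee exactly by substituting the Chebyshev-accelerated gossip matrix $P_K(W)$ for $W$, invoking $\gamma(P_K(W))\geq 1/4$ for $K=\lfloor 1/\sqrt{\gamma(W)}\rfloor$ from \cite{ScamanBBLM17}, and charging $K\tau$ communication time plus $M$ parallel subgradient evaluations per outer iteration, yielding $T(K\tau+M)$ with $T=M=\lceil 4RL_\ell/\varepsilon\rceil$. Your explicit check that $P_K(W)$ is itself a valid gossip matrix (symmetric, PSD, kernel equal to ${\rm Span}(\one)$) is a consistency detail the paper leaves implicit, but it does not alter the argument.
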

\begin{proof}
See \Appendix{proof_cv_MSPD}.
\end{proof}
\begin{remark}
It is clear from the algorithm's description that it completes its $T$ iterations by time 
\BEA
T_\varepsilon&\leq&\left\lceil\frac{4RL_\ell}{\varepsilon}\right\rceil\frac{\tau}{\sqrt{\gamma(W)}} + \left\lceil\frac{4RL_\ell}{\varepsilon}\right\rceil^2\,.
\EEA
To obtain the average of local parameters $\bar{\theta}_T = \frac{1}{nT}\sum_{t=1}^T\sum_{i=1}^n \theta_i$, one can then rely  on the gossip algorithm \cite{boyd2006randomized} to average over the network the individual nodes' time averages. Let $W' = I - c_3P_K(W)$ where $c_3 = (1+c_1^{2K}) / (1-c_1^K)^2$. Since $W'$ is bi-stochastic, semi-definite positive and $\lambda_2(W') = 1 - \gamma(P_K(W))\leq 3/4$, using it for gossiping the time averages leads to a time $O\big(\frac{\tau}{\sqrt{\gamma}}\ln\big(\frac{RL_\ell}{\varepsilon}\big)\big)$ to ensure that each node reaches a precision $\varepsilon$ on the objective function (see \cite{boyd2006randomized} for more details on the linear convergence of gossip), which is negligible compared to \Eq{cv_MSPD}. \end{remark}

\begin{remark}
A stochastic version of the algorithm is also possible by considering stochastic oracles on each $f_i$ and using stochastic subgradient descent instead of the subgradient method.
\end{remark}
 
\begin{remark}\label{remark_4}
In the more general context where node compute times $\rho_i$ are not necessarily all equal to~1, we may still apply \Alg{MSPD}, where now the number of subgradient iterations performed by node $i$ is $M/\rho_i$ rather than $M$. 
The proof of Theorem~\ref{th:cv_MSPD} also applies, and now yields the modified upper bound on time to reach precision $\varepsilon$:
\BEA
O\bigg( \frac{RL_\ell}{\varepsilon}\frac{\tau}{\sqrt{\gamma(W)}} + \Big(\frac{RL_c}{\varepsilon}\Big)^2 \bigg)\, ,
\EEA
where $L_c^2=\frac{1}{n}\sum_{i=1}^n \rho_i L_i^2$.
\end{remark}

\section{Conclusion}
In this paper, we provide optimal convergence rates for non-smooth and convex distributed optimization in two settings: Lipschitz continuity of the \emph{global} objective function, and Lipschitz continuity of \emph{local} individual functions. Under the \emph{local} regularity assumption, we provide optimal convergence rates that depend on the $\ell_2$-average of the local Lipschitz constants and the (normalized) eigengap of the gossip matrix. Moreover, we also provide the first optimal decentralized algorithm, called \emph{multi-step primal-dual} (MSPD).

Under the \emph{global} regularity assumption, we provide a lower complexity bound that depends on the Lipschitz constant of the (global) objective function, as well as a distributed version of the smoothing approach of \cite{doi:10.1137/110831659} and show that this algorithm is within a $d^{1/4}$ multiplicative factor of the optimal convergence rate.

In both settings, the optimal convergence rate exhibits two different speeds: a slow rate in $\Theta(1/\sqrt{t})$ with respect to local computations and a fast rate in $\Theta(1/t)$ due to communication. Intuitively, communication is the limiting factor in the initial phase of optimization. However, its impact decreases with time and, for the final phase of optimization, local computation time is the main limiting factor.

The analysis presented in this paper allows several natural extensions, including time-varying communication networks, asynchronous algorithms, stochastic settings, and an analysis of unequal node compute speeds going beyond Remark~\ref{remark_4}. Moreover, despite the efficiency of \AlgN, finding an optimal algorithm under the global regularity assumption remains an open problem and would make a notable addition to this work.

\subsubsection*{Acknowledgements}
We acknowledge support from the European Research Council (grant SEQUOIA 724063).

\vskip 0.2in
\bibliographystyle{unsrt}
\bibliography{nonsmoothDistrib}

\appendix

\section{Proof of the convergence rate of \AlgN (\Theorem{cv_rate})}\label{appendix:proof_cv_rate}
Corollary $2.4$ of \cite{doi:10.1137/110831659} gives, with the appropriate choice of gradient step $\eta_t$ and smoothing $\gamma_t$,
\BEQ
\Exp{\avgFun(\theta_T)} - \min_{\theta\in\CvxSet}\avgFun(\theta) \leq \frac{10RL_gd^{1/4}}{T} + \frac{5RL_g}{\sqrt{TK}}.
\EEQ
Thus, to reach a precision $\varepsilon>0$, we may set $T = \left\lceil \frac{20RL_gd^{1/4}}{\varepsilon} \right\rceil$ and $K = \left\lceil \frac{5RL_gd^{-1/4}}{\varepsilon} \right\rceil$, leading to the desired bound on the time $T_\varepsilon = T(2\Delta\tau + K)$ to reach $\varepsilon$.

\section{Proof of the lower bound under global regularity (\Theorem{lb})}\label{appendix:proof_lb}
Let $i_0\in\mathcal{V}$ and $i_1\in\mathcal{V}$ be two nodes at distance $\Delta$.
The function used by \cite{bubeck2015convex} to prove the oracle complexity for Lipschitz and bounded functions is 
\BEQ
g_1(\theta) = \delta\max_{i\in\{1,...,t\}} \theta_i + \frac{\alpha}{2}\|\theta\|_2^2.
\EEQ
By considering this function on a single node (\eg $i_0$), at least $O\left(\left(\frac{RL}{\varepsilon}\right)^2\right)$ subgradients will be necessary to obtain a precision $\varepsilon$. Moreover, we also split the difficult function used in \cite{DBLP:conf/nips/ArjevaniS15}
\BEQ
g_2(\theta) = \gamma\sum_{i=1}^t |\theta_{i+1} - \theta_i| - \beta\theta_1 + \frac{\alpha}{2}\|\theta\|_2^2,
\EEQ
on the two extremal nodes $i_0$ and $i_1$ in order to ensure that communication is necessary between the most distant nodes of the network. The final function that we consider is, for all $i\in\{1,...,n\}$,
\BEQ\label{eq:lb_funs}
f_i(\theta) = \left\{
\BA{ll}
\gamma\sum_{i=1}^k |\theta_{2i} - \theta_{2i-1}| + \delta\max_{i\in\{2k+2,...,2k+1+l\}} \theta_i &\mbox{if } i=i_0\\
\gamma\sum_{i=1}^k |\theta_{2i+1} - \theta_{2i}| - \beta\theta_1 + \frac{\alpha}{2}\|\theta\|_2^2 &\mbox{if } i=i_1\\
0 &\mbox{otherwise}
\EA\right.,
\EEQ
where $\gamma,\delta,\beta,\alpha > 0$ and $k,l \geq 0$ are parameters of the function satisfying $2k+l < d$. The objective function is thus
\BEQ
\avgFun(\theta) = \frac{1}{n}\left[\gamma\sum_{i=1}^{2k} |\theta_{i+1} - \theta_i| - \beta\theta_1 + \delta\max_{i\in\{2k+2,...,2k+1+l\}} \theta_i + \frac{\alpha}{2}\|\theta\|_2^2\right]
\EEQ
First, note that reordering the coordinates of $\theta$ between $\theta_2$ and $\theta_{2k+1}$ in a decreasing order can only decrease the value function $\avgFun(\theta)$. Hence, the optimal value $\theta^*$ verifies this constraint and
\BEQ
\avgFun(\theta^*) = \frac{1}{n}\left[-\gamma\theta^*_{2k+1} - (\beta-\gamma)\theta^*_1 + \delta\max_{i\in\{2k+2,...,2k+1+l\}} \theta^*_i + \frac{\alpha}{2}\|\theta^*\|_2^2\right].
\EEQ
Moreover, at the optimum, all the coordinates between $\theta_2$ and $\theta_{2k+1}$ are equal, all the coordinates between $\theta_{2k+2}$ and $\theta_{2k+1+l}$ are also equal, and all the coordinates after $\theta_{2k+1+l}$ are zero. Hence
\BEQ
\avgFun(\theta^*) = \frac{1}{n}\left[-\gamma\theta^*_{2k+1} - (\beta-\gamma)\theta^*_1 + \delta\theta^*_{2k+2} + \frac{\alpha}{2}\left({\theta^*_1}^2 + 2k{\theta^*_{2k+1}}^2 + l{\theta^*_{2k+2}}^2\right)\right],
\EEQ
and optimizing over $\theta^*_1 \geq \theta^*_{2k+1} \geq 0 \geq \theta^*_{2k+2}$ leads to, when $\beta \geq \gamma(1+\frac{1}{2k})$,
\BEQ
\avgFun(\theta^*) = \frac{-1}{2\alpha n}\left[(\beta - \gamma)^2 + \frac{\gamma^2}{2k} + \frac{\delta^2}{l}\right].
\EEQ
Now note that, starting from $\theta_0 = 0$, each subgradient step can only increase the number of non-zero coordinates between $\theta_{2k+2}$ and $\theta_{2k+1+l}$ by at most one. Thus, when $t < l$, we have
\BEQ
\max_{i\in\{2k+2,...,2k+1+l\}} \theta_{t,i} \geq 0\ .
\EEQ
Moreover, increasing the number of non-zero coordinates between $\theta_1$ and $\theta_{2k+1}$ requires at least one subgradient step and $\Delta$ communication steps. As a result, when $t < \min\{l, 2k\Delta\tau\}$, we have $\theta_{t,2k+1} = 0$ and
\BEQ
\BA{lll}
\avgFun(\theta_t) &\geq& \min_{\theta\in\R^d} \frac{1}{n}\left[-(\beta-\gamma)\theta_1 + \frac{\alpha}{2}\|\theta\|_2^2\right]\\
&\geq& \frac{-(\beta - \gamma)^2}{2\alpha n}\ . 
\EA
\EEQ
Hence, we have, for $t < \min\{l, 2k\Delta\tau\}$,
\BEQ
\avgFun(\theta_t) - \avgFun(\theta^*) \geq \frac{1}{2\alpha n}\left[\frac{\gamma^2}{2k} + \frac{\delta^2}{l}\right].
\EEQ
Optimizing $\avgFun$ over a ball of radius $R \geq \|\theta^*\|_2$ thus leads to the previous approximation error bound, and we choose
\BEQ
R = \|\theta^*\|_2 = \frac{1}{\alpha^2}\left[(\beta-\gamma)^2 + \frac{\gamma^2}{2k} + \frac{\delta^2}{l}\right].
\EEQ
Finally, the Lipschitz constant of the objective function $\avgFun$ is
\BEQ
L_g = \frac{1}{n}\left[ \beta + 2\sqrt{2k+1} \gamma + \delta + \alpha R \right],
\EEQ
and setting the parameters of $\avgFun$ to $\beta = \gamma(1+\frac{1}{\sqrt{2k}})$, $\delta = \frac{L_g n}{9}$, $\gamma = \frac{L_g n}{9\sqrt{k}}$, $l = \lfloor t\rfloor + 1$, and $k = \left\lfloor\frac{t}{2\Delta\tau}\right\rfloor + 1$ leads to $t < \min\{l, 2k\Delta\tau\}$ and
\BEQ
\avgFun(\theta_t) - \avgFun(\theta^*) \geq \frac{RL_g}{36}\sqrt{\frac{1}{(1 + \frac{t}{2\Delta\tau})^2} + \frac{1}{1 + t}},
\EEQ
while $\avgFun$ is $L$-Lipschitz and $\|\theta^*\|_2\leq R$.

\section{Proof of the lower bound under local regularity (\Theorem{lb_W})}\label{appendix:proof_lb_W}
Following the idea introduced in \cite{ScamanBBLM17}, we prove \Theorem{lb_W} by applying \Theorem{lb} on linear graphs and splitting the local functions of \Eq{lb_funs} on multiple nodes to obtain $L_g \approx L_\ell$.
\begin{lemma}\label{lem:lin_graphs}
Let $\gamma\in(0,1]$. There exists a graph $\mathcal{G}_\gamma$ of size $n_\gamma$ and a gossip matrix $W_\gamma\in\R^{n_\gamma\times n_\gamma}$ on this graph such that $\gamma(W_\gamma) = \gamma$ and
\BEQ\label{eq:lin_graphs}
\gamma \geq \frac{2}{(n_\gamma+1)^2}.
\EEQ
When $\gamma\geq 1/3$, $\mathcal{G}_\gamma$ is a totally connected graph of size $n_\gamma=3$. Otherwise, $\mathcal{G}_\gamma$ is a linear graph of size $n_\gamma\geq 3$.
\end{lemma}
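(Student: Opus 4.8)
The plan is to treat the two regimes $\gamma \geq 1/3$ and $\gamma < 1/3$ separately, exploiting the freedom to put arbitrary positive weights on the edges (the gossip-matrix axioms only constrain the support, symmetry, and positive semi-definiteness, and force the kernel to be ${\rm Span}(\one)$, which holds as soon as the weighted graph is connected). The genuinely delicate point is that we must realize the eigengap $\gamma$ \emph{exactly}, whereas the natural unweighted graphs attain only a discrete set of eigengaps. I would resolve this with a one-parameter family of edge weights and an intermediate value argument, relying on the fact that the eigenvalues of a Laplacian depend continuously on its entries.

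For $\gamma \in [1/3,1]$ I would take $n_\gamma = 3$ and the weighted complete graph with gossip matrix
\[
W(a) = \begin{pmatrix} 1+a & -1 & -a \\ -1 & 2 & -1 \\ -a & -1 & 1+a \end{pmatrix}, \qquad a \in [0,1].
\]
A direct check shows that $\one$, $(1,0,-1)^\top$ and $(1,-2,1)^\top$ are eigenvectors with eigenvalues $0$, $1+2a$ and $3$. Since $1+2a \leq 3$ on $[0,1]$, the normalized eigengap is $\gamma(W(a)) = (1+2a)/3$, which sweeps $[1/3,1]$; choosing $a = (3\gamma-1)/2$ gives $\gamma(W(a)) = \gamma$ exactly. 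For $a>0$ all three edges carry positive weight, so the graph is $K_3$, while the boundary $a=0$ (i.e. $\gamma=1/3$) degenerates to a path and is also covered by the next case. The required inequality is immediate, since $\gamma \geq 1/3 > 1/8 = 2/(3+1)^2$.

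For $\gamma \in (0,1/3)$ I would use a path. The unweighted path $P_n$ has Laplacian eigenvalues $4\sin^2(k\pi/(2n))$, $k=0,\dots,n-1$, hence normalized eigengap $\tan^2(\pi/(2n))$, a strictly decreasing function of $n$ equal to $1/3$ at $n=3$. Let $\nu > 3$ solve $\tan^2(\pi/(2\nu)) = \gamma$ and set $n_\gamma = \lfloor \nu \rfloor \geq 3$, so that $\tan^2(\pi/(2 n_\gamma)) \geq \gamma$. On the path with $n_\gamma$ nodes I keep all edge weights equal to $1$ except the last, which I set to $w \in (0,1]$. As $w$ decreases the graph stays connected, so the second-smallest eigenvalue $\mu_2(w)$ is positive, whereas in the limit $w\to 0^+$ the last node decouples and $\mu_2(w)\to 0$ while the largest eigenvalue stays bounded away from $0$. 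Thus $w\mapsto \gamma(W(w)) = \mu_2(w)/\mu_n(w)$ is continuous on $(0,1]$, equals $\tan^2(\pi/(2n_\gamma)) \geq \gamma$ at $w=1$, and tends to $0$ as $w\to 0^+$; by the intermediate value theorem some $w^\star \in (0,1]$ yields $\gamma(W(w^\star)) = \gamma$ exactly.

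It remains to verify $\gamma \geq 2/(n_\gamma+1)^2$, which is where the two requirements interact. Using $\tan x \geq x$ on $(0,\pi/2)$ together with $n_\gamma + 1 \geq \nu$, I would write $\gamma = \tan^2(\pi/(2\nu)) \geq \pi^2/(4\nu^2) > 2/\nu^2 \geq 2/(n_\gamma+1)^2$, the strict step using $\pi^2/4 > 2$. I expect the main obstacle to be exactly this simultaneous control: the path must be large enough that its maximal eigengap exceeds $\gamma$ (so that exactness via the intermediate value theorem is available), yet small enough that $2/(n_\gamma+1)^2 \leq \gamma$. The choice $n_\gamma = \lfloor \nu\rfloor$ together with the numerical margin $\pi^2/4 > 2$ is precisely what reconciles the two constraints.
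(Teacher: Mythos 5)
Your proposal is correct and follows essentially the same route as the paper's proof: a reweighted $K_3$ for $\gamma\geq 1/3$, a path with one perturbed edge weight plus an intermediate value argument for $\gamma<1/3$ (the paper's $x_n=\frac{1-\cos(\pi/n)}{1+\cos(\pi/n)}$ is exactly your $\tan^2(\pi/(2n))$), and the same $\tan x\geq x$ estimate for the bound $\gamma\geq 2/(n_\gamma+1)^2$. The only differences are cosmetic refinements on your part: an explicit eigendecomposition in the $K_3$ case instead of continuity, selecting $n_\gamma=\lfloor\nu\rfloor$ via a real root rather than bracketing with the discrete sequence $x_n$, and restricting the weight parameter to $(0,1]$ so the graph stays connected throughout (slightly cleaner than the paper's inclusion of the disconnected endpoint).
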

\begin{proof}
First of all, when $\gamma\geq 1/3$, we consider the totally connected network of $3$ nodes, reweight only the edge $(v_1,v_3)$ by $a\in[0,1]$, and let $W_a$ be its Laplacian matrix. If $a=1$, then the network is totally connected and $\gamma(W_a) = 1$. If, on the contrary, $a=0$, then the network is a linear graph and $\gamma(W_a) = 1/3$. Thus, by continuity of the eigenvalues of a matrix, there exists a value $a\in[0,1]$ such that $\gamma(W_a) = \gamma$ and \Eq{lin_graphs} is trivially verified.
Otherwise, let $x_n = \frac{1-\cos(\frac{\pi}{n})}{1+\cos(\frac{\pi}{n})}$ be a decreasing sequence of positive numbers. Since $x_3 = 1/3$ and $\lim_n x_n = 0$, there exists $n_\gamma\geq 3$ such that $x_{n_\gamma}\geq\gamma > x_{n_\gamma+1}$.
Let $\mathcal{G}_\gamma$ be the linear graph of size $n_\gamma$ ordered from node $v_1$ to $v_{n_\gamma}$, and weighted with $w_{i,i+1} = 1 - a\one\{i=1\}$. If we take $W_a$ as the Laplacian of the weighted graph $\mathcal{G}_\gamma$, a simple calculation gives that, if $a=0$, $\gamma(W_a) = x_{n_\gamma}$ and, if $a=1$, the network is disconnected and $\gamma(W_a)=0$. Thus, there exists a value $a\in[0,1]$ such that $\gamma(W_a) = \gamma$. Finally, by definition of $n_\gamma$, one has $\gamma > x_{n_\gamma+1} \geq\frac{2}{(n_\gamma+1)^2}$.
\end{proof}

Let $\gamma\in(0,1]$ and $\mathcal{G}_\gamma$ the graph of \Lemma{lin_graphs}. We now consider $I_0 = \{1,...,m\}$ and $I_1 = \{n_\gamma-m+1,...,n_\gamma\}$ where $m = \lfloor \frac{n_\gamma+1}{3}\rfloor$. When $\gamma < 1/3$, the distance $d(I_0,I_1)$ between the two sets $I_0$ and $I_1$ is thus bounded by
\BEQ
d(I_0,I_1) = n_\gamma - 2m + 1 \geq \frac{n_\gamma+1}{3},
\EEQ
and we have
\BEQ\label{eq:gamma_d}
\frac{1}{\sqrt{\gamma}} \leq \frac{3d(I_0,I_1)}{\sqrt{2}}.
\EEQ
Moreover, \Eq{gamma_d} also trivially holds when $\gamma\geq1/3$. We now consider the local functions of \Eq{lb_funs} splitted on $I_0$ and $I_1$:
\BEQ
f_i(\theta) = \left\{
\BA{ll}
\frac{1}{m}\left[\gamma\sum_{i=1}^k |\theta_{2i} - \theta_{2i-1}| + \delta\max_{i\in\{2k+2,...,2k+1+l\}} \theta_i\right] &\mbox{if } i\in I_0\\
\frac{1}{m}\left[\gamma\sum_{i=1}^k |\theta_{2i+1} - \theta_{2i}| - \beta\theta_1 + \frac{\alpha}{2}\|\theta\|_2^2\right] &\mbox{if } i\in I_1\\
0 &\mbox{otherwise}
\EA\right..
\EEQ
The average function $\avgFun$ remains unchanged and the time to communicate a vector between a node of $I_0$ and a node of $I_1$ is at least $d(I_0,I_1)\tau$. Thus, the same result as \Theorem{lb} holds with $\Delta = d(I_0,I_1)$. We thus have
\BEQ\label{eq:lb_W_near}
\avgFun(\theta_{i,t}) - \min_{\theta\in B_2(R)}\avgFun(\theta) \geq \frac{RL_g}{36}\sqrt{\frac{1}{(1 + \frac{t}{2d(I_0,I_1)\tau})^2} + \frac{1}{1 + t}}\,.
\EEQ
Finally, the local Lipschitz constant $L_\ell$ is bounded by
\BEQ\label{eq:Lell}
L_\ell \leq \sqrt{\frac{n_\gamma}{m}} L_g \leq 3 L_g,
\EEQ
and \Eq{gamma_d}, \Eq{lb_W_near} and \Eq{Lell} lead to the desired result.

\section{Proof of the convergence rate of MSPD (\Theorem{cv_SSPD} and \Theorem{cv_MSPD})}
\label{appendix:proof_cv_MSPD}
Theorem 1 (b) in \cite{Chambolle2011} implies that, provided $\tau\sigma \lambda_1(W)<1$, the algorithm with exact proximal step leads to a restricted primal-dual  gap 
$$
\sup_{\| \Lambda'\|_F \leq c } \Big\{ \frac{1}{n} \sum_{i=1}^n f_i(\theta_i) - \tr \Lambda'^\top    \Theta A \Big\}
- \inf_{\Theta' \in \mathcal{K}^n} \Big\{  \frac{1}{n} \sum_{i=1}^n f_i(\theta_i') - \tr \Lambda^\top    \Theta' A \Big\}
$$
of 
$$ \varepsilon = 
\frac{1}{2t} \Big( \frac{nR^2}{  \eta} + \frac{c^2}{  \sigma}
\Big).
$$
This implies that our candidate $\Theta$ is such that
$$ \frac{1}{n} \sum_{i=1}^n f_i(\theta_i) + c \|   \Theta A  \|_F
\leq
 \inf_{ \Theta' \in \mathcal{K}^n} \Big\{ \frac{1}{n} \sum_{i=1}^n f_i(\theta_i') + c \|   \Theta' A  \|_F + \varepsilon \Big\}.
$$

Let $\theta$ be the average of all $\theta_i$. We have:
\BEAS
\frac{1}{n} \sum_{i=1}^n f_i(\theta)
& \leq & \frac{1}{n} \sum_{i=1}^n f_i(\theta_i) + \frac{1}{n} \sum_{i=1}^n L_i \| \theta_i - \theta \|
\\
& \leq & \frac{1}{n} \sum_{i=1}^n f_i(\theta_i) +  \frac{1}{\sqrt{n}} \sqrt{ \frac{1}{n} \sum_{i=1}^n L_i^2 } \cdot \big\| \Theta ( \idm - 11^\top / n)  \big\|_F \\
& \leq & 
\frac{1}{n} \sum_{i=1}^n f_i(\theta_i) +  \frac{1}{\sqrt{n}} \sqrt{ \frac{\frac{1}{n} \sum_{i=1}^n L_i^2}{  \lambda_{n-1}(W)  } } \cdot \big\| \Theta A \big\|_F. 
\EEAS
Thus, if we take $c =  \frac{1}{\sqrt{n}} \sqrt{ \frac{\frac{1}{n} \sum_{i=1}^n L_i^2}{  \lambda_{n-1}(W)  } }$, we obtain
\[
\frac{1}{n} \sum_{i=1}^n f_i(\theta) \leq  \frac{1}{n} \sum_{i=1}^n f_i(\theta_\ast) + \varepsilon,
\]
and we thus obtain a $\varepsilon$-minimizer of the original problem.

We have 
$$
\varepsilon \leq \frac{1}{2T} \Big( \frac{nR^2}{  \eta} + 
\frac{ \frac{1}{\lambda_{n-1}(W)} \frac{1}{n^2} \sum_{i=1}^n L_i^2}{  \sigma}
\Big)
$$
with the constraint $\sigma \eta \lambda_1(W) <1$.
This leads to, with the choice
$$
\eta = n R \sqrt{ \frac{\lambda_{n-1}(W) / \lambda_1(W)}{\sum_{i=1}^n L_i^2/n} }
$$
and taking $\sigma$ to the limit $\sigma \eta \lambda_1(W)=1$,
to a convergence rate of
$$
\varepsilon = \frac{1}{T}  R \sqrt{ \frac{1}{n}\sum_{i=1}^n L_i^2} 
\sqrt{  \frac{\lambda_1(W)}{ \lambda_{n-1}(W)}}.
$$
Since we cannot use the exact proximal operator of $f_i$, we instead approximate it.
If we approximate (with the proper notion of gap \cite[Eq.~(11)]{Chambolle2011}) each
$\argmin_{\theta_i \in \mathcal{K}}     f_i(\theta_i)   + \frac{n}{2 \eta} \| \theta_i - z\|^2  $ up to $\delta_i$, then the overall added gap is $\frac{1}{n} \sum_{i=1}^n \delta_i$. If we do $M$ steps of subgradient steps then the associated gap is
$\delta_i = \frac{L_i^2   \eta}{n M}$ (standard result for strongly-convex subgradient~\cite{lacoste2012simpler}). Therefore the added gap is
$$
\frac{1}{M}  R \sqrt{ \frac{1}{n}\sum_{i=1}^n L_i^2} 
\sqrt{  \frac{\lambda_1(W)}{ \lambda_{n-1}(W)}}.
$$
Therefore after $T$ communication steps, i.e., communication time $T \tau$  plus $MT$ subgradient evaluations, i.e., time $MT$, we get an error of
$$
\Big( \frac{1}{T} + \frac{1}{M} \Big)  \frac{R L_\ell}{\sqrt{\gamma}},
$$
where $\gamma = \gamma(W) = \lambda_{n-1}(W) / \lambda_1(W)$.
Thus to reach $\varepsilon$, it takes
$$
\left\lceil\frac{2RL_\ell}{\varepsilon} \frac{1}{\sqrt{\gamma}}\right\rceil \tau + \left\lceil\frac{4RL_\ell}{\varepsilon} \frac{1}{\sqrt{\gamma}}\right\rceil^2.
$$
The second term is optimal, while the first term is not. We therefore do accelerated gossip instead of plain gossip. By performing $K$ steps of gossip instead of one, with $K = \lfloor 1/ \sqrt{\gamma}\rfloor$, the eigengap is lower bounded by $\gamma(P_K(W)) \geq 1/4$, and the overall time to obtain an error below $\varepsilon$ becomes
$$
\left\lceil\frac{4RL_\ell}{\varepsilon}\right\rceil\frac{\tau}{\sqrt{\gamma(W)}} + \left\lceil\frac{4RL_\ell}{\varepsilon}\right\rceil^2\,,
$$
as announced. 

\end{document}